\def \phi {\varphi}
\def \RN {\mathbb{R}^N}
\def \R {\mathbb{R}}
\newcommand{\Rn}{\mathbb R^n}
\newcommand{\Rm}{\mathbb R^m}
\newcommand{\Hn}{\mathbb H^n}
\newcommand{\p}{\partial}
\newcommand{\bG}{\mathbb {G}}
\newcommand{\la}{\lambda}
\numberwithin{equation}{section}
\newcommand{\beq}{\begin{equation}}
\newcommand{\bea}[1]{\begin{array}{#1} }
\newcommand{\eeq}{ \end{equation}}
\newcommand{\ea}{ \end{array}}
\newcommand{\ve}{\varepsilon}
\newcommand{\sul}{\Delta_H}
\newcommand{\sa}{\langle}
\newcommand{\da}{\rangle}
\newcommand{\C}{\mathbb{C}}
\newcommand{\fB}{\mathscr L_{\Phi,\Psi}}
\newcommand{\fBa}{\mathscr L_{\Phi,\Psi,\alpha}}
\newtheorem{theorem}{Theorem}[section]
\newtheorem{lemma}[theorem]{Lemma}
\newtheorem{proposition}[theorem]{Proposition}
\newtheorem{remark}[theorem]{Remark}
\numberwithin{equation}{section}
\begin{document}

\title[Positive solutions,  etc.]{Positive solutions of a critical equation in sub-Finsler geometry}

\keywords{Minkowski gauges. Yamabe type equation. Baouendi-Grushin operators in Finsler geometry}

\subjclass{35H20, 35B09, 35R03, 53C17, 58J60}

\date{}

\begin{abstract}
We compute a two-parameter family of explicit positive solutions of a critical Yamabe type equation for a nonlinear operator that sits at the intersection of  Finsler and sub-Riemannian geometry.
\end{abstract}

\author{Nicola Garofalo}

\address{Dipartimento d'Ingegneria Civile e Ambientale (DICEA)\\ Universit\`a di Padova\\ Via Marzolo, 9 - 35131 Padova,  Italy}
\vskip 0.2in
\email{nicola.garofalo@unipd.it}

\author{Paolo Salani}
\address{Department of Mathematics and Computer Science\\
Universit\`a di Firenze\\
50134 Florence,
ITALY}\email[Paolo Salani]{paolo.salani@unifi.it}

\thanks{N. Garofalo is supported in part by a Progetto SID (Investimento Strategico di Dipartimento): ``Aspects of nonlocal operators via fine properties of heat kernels", University of Padova (2022); and by a PRIN (Progetto di Ricerca di Rilevante Interesse Nazionale) (2022): ``Variational and analytical aspects of geometric PDEs". He has also been partially supported by a Visiting Professorship at the Arizona State University. P. Salani is supported in part by a PRIN (2022): "Geometric-Analytic Methods for PDEs and Applications (GAMPA)"}

\maketitle


\section{Introduction}\label{S:intro}

In the recent work \cite{DGGS} the authors have discovered an explicit fundamental solution of a new partial differential equation which sits at the intersection of Finsler and sub-Riemannian geometry. These two areas of investigation both represent an extension of Riemannian geometry, but have so far developed independently of one another. On the other hand, as explained in the above mentioned work, there is an interest in merging them into a unified body. 

To introduce the relevant framework of this note, consider the product space $\RN= \Rm\times \R^k$, with coordinates $z\in \Rm$, $\sigma\in \R^k$. Suppose that on each of the two layers of $\RN$ we assign  Minkowski norms $\Phi$ and $\Psi$. By this we mean that
$\Phi^2\in C^{2}(\Rm\setminus\{0\})$, $\Psi^2\in C^{2}(\R^k\setminus\{0\})$, and are strictly convex. We respectively denote by $\Phi^0$ and $\Psi^0$ their dual norms 
\begin{equation}\label{dual0}
\Phi^0(z) = \underset{\Phi(\zeta) = 1}{\sup}\ \sa z,\zeta\da, \ \ \ \ \ \ \ \ \Psi^0(\sigma) = \underset{\Phi(\eta) = 1}{\sup}\ \sa \sigma,\eta\da.
\end{equation}
The levels sets of the functions $\Phi^0$ and $\Psi^0$ are often referred to as \emph{Wulff shapes}. 
In \cite{DGGS} the following energy functional was introduced
\begin{equation}\label{Fen}
\mathscr E^{\Phi,\Psi}_{\alpha,p}(u) = \frac 1p \int_{\RN} \left(\Phi(\nabla_z u)^2 + \frac{\Phi^0(z)^{2\alpha}}4 \Psi(\nabla_\sigma u)^2\right)^{\frac p2} dzd\sigma,\ \ \ \ \ \ \ \ \ \ \ \alpha>0,\ 1<p<\infty.
\end{equation}
As there illustrated, when $\Phi(z) = |z|$, $\Psi(\sigma) = |\sigma|$, in which case one has $\Phi^0(z) = |z|$, $\Psi^0(\sigma) = |\sigma|$, energies such as \eqref{Fen} arise in the study of problems ranging from analysis and geometry of CR manifolds, to the theory of free boundaries, that of quasiconformal mappings, or to the strong rigidity of locally symmetric spaces. 

There is a family of anisotropic dilations naturally attached to \eqref{Fen},
\begin{equation}\label{dil}
\delta_t(z,\sigma) = (t z, t^{\alpha+1} \sigma),\ \ \ \ \ \ \ t>0,
\end{equation}
with respect to which Lebesgue measure in $\RN$ scales according to the formula 
\[
d(\delta_t(z,\sigma)) = t^Q dzd\sigma,
\]
where we have let
\begin{equation}\label{Q}
Q = Q_\alpha = m + (\alpha+1) k.
\end{equation}
Such number plays the role of a dimension. In this connection, we note that, when $1<p<Q$, the energy \eqref{Fen} is the natural one for the Sobolev type embedding 
\begin{equation}\label{sob}
\left(\int_{\RN} |u(z,\sigma)|^q dzd\sigma\right)^{1/q} \le C(N,\alpha,p)\ \mathscr E^{\Phi,\Psi}_{\alpha,p}(u),\ \ \ \ \ \ \ \frac 1p - \frac 1q = \frac 1Q,
\end{equation}
where $u\in C^\infty_0(\RN)$, and $C(N,p)>0$ is a suitable constant.

When $p=2$, and $\Phi(z) = |z|$, $\Psi(\sigma) = |\sigma|$, the following explicit fundamental solution with pole at $(0,0)\in \RN$ for the Euler-Lagrange equation of \eqref{Fen} was first discovered in \cite{G}
\begin{equation}\label{Ga}
 \Gamma_\alpha(z,\sigma)=\frac{C_{\alpha}}{R_\alpha(z,\sigma)^{Q-2}},
\end{equation}
where $C_\alpha>0$ is an explicit constant and 
\begin{align}\label{ra}
R_\alpha(z,\sigma) & = \left(|z|^{2(\alpha+1)}+4(\alpha+1)^2|\sigma|^2\right)^{\frac1{2(\alpha+1)}}.
\end{align}

The dilations \eqref{dil}, and formulas \eqref{Ga} and \eqref{ra}, suggest that one should attach to the energy \eqref{Fen}  the following anisotropic \emph{Minkowski gauge} in $\RN$ 
\begin{equation}\label{theta}
\Theta(z,\sigma) = \left(\Phi(z)^{2(\alpha+1)} + 4(\alpha+1)^2 \Psi(\sigma)^2\right)^{\frac{1}{2(\alpha+1)}},
\end{equation}
for which one clearly has $\Theta(\delta_t(z,\sigma)) = t \Theta(z,\sigma)$. In \cite{DGGS} a new notion of anisotropic Legendre transform $\Theta^0$ was introduced, namely
\begin{equation}\label{theta0}
\Theta^0(z,\sigma)^{\alpha+1}= \underset{\Theta(\xi,\tau) = 1}{\sup}\ \bigg(|\sa z,\xi\da|^{\alpha+1}+ 4(\alpha+1)^2 \sa\sigma, \tau\da\bigg),
\end{equation}
and the following notable property was established for such transform
\begin{equation}\label{theta0}
\Theta^0(z,\sigma) = \left(\Phi^0(z)^{2(\alpha+1)} + 4(\alpha+1)^2 \Psi^0(\sigma)^2\right)^{\frac{1}{2(\alpha+1)}},
\end{equation}
where $\Phi^0$ and $\Psi^0$ are the usual dual norms of $\Phi$ and $\Psi$, as in \eqref{dual0} above.
In one of the main results in \cite{DGGS} it was proved that, with $Q$ as in \eqref{Q}, the function
\begin{equation}\label{wow}
\mathscr G_{\alpha,p}(z,\sigma) = \begin{cases}
C_{\alpha,p}\ \Theta^0(z,\sigma)^{-\frac{Q-p}{p-1}},\ \ \ \ \ \ \ \ p\not= Q,
\\
\\
C_\alpha\ \log \Theta^0(z,\sigma),\ \ \ \ \ \ \ \ \ \ \ \ p = Q,
\end{cases}
\end{equation}
is a fundamental solution with pole in $(0,0)$ of the Euler-Lagrange equation of \eqref{Fen}. In \eqref{wow}, the positive constants $C_{\alpha,p}$ and $C_\alpha$ are explicitly given, and they involve the Wulff shapes of the gauge $\Theta^0$.

\begin{remark}\label{R:ag}
The reader should note the complete symmetry between $\Theta$ and $\Theta^0$ in \eqref{theta}, \eqref{theta0}. Because of such symmetry, the two gauges are indistinguishable in the special case when $\Phi(z) = |z|$, $\Psi(\sigma) = |\sigma|$. With this in mind, the function \eqref{ra} that appears in the above cited \eqref{Ga} from \cite{G} should in fact be understood as the $\Theta^0$ in \eqref{wow}.   
\end{remark}

In this note we are interested in the special situation in which $p=2$ and $\alpha=1$. In such case, the Euler-Lagrange equation of \eqref{Fen} involves the  following nonlinear operator in $\RN$
\begin{equation}\label{Fel2}
\mathscr L_{\Phi,\Psi}(u) = \Delta_{\Phi}(u) + \frac{\Phi^0(z)^{2}}4  \Delta_\Psi(u),
\end{equation}
where we have respectively indicated with $\Delta_{\Phi}$ and $\Delta_{\Psi}$ the Finsler Laplacians in $\Rm$ and $\R^k$ with respect to the norms $\Phi$ and $\Psi$, i.e., the quasilinear operators 
\begin{equation}\label{Flap}
\Delta_\Phi(u) = \operatorname{div}_z(\Phi(\nabla_z u)\nabla \Phi(\nabla_z u)),\ \ \ \ \ \ \ \ \ \Delta_\Psi(u) = \operatorname{div}_\sigma(\Psi(\nabla_\sigma u)\nabla \Phi(\nabla_\sigma u)),
\end{equation}
When $\Phi(z) = |z|$, $\Psi(\sigma) = |\sigma|$, we have $\Phi^0(z) = |z|$, $\Psi^0(\sigma) = |\sigma|$, and \eqref{Fel2} becomes the linear Baouendi-Grushin operator 
\begin{equation}\label{bg}
\mathscr B(u) = \Delta_z u + \frac{|z|^2}4 \Delta_\sigma u.
\end{equation} 
As it is well-known, \eqref{bg} represents the action on cylindrical functions of the horizontal Laplacian in a group of Heisenberg type $\bG$, 
\begin{equation}\label{sul}
\sul u =  \Delta_z + \frac{|z|^2}{4} \Delta_\sigma  + \sum_{\ell = 1}^k \p_{\sigma_\ell} \sum_{i<j} b^\ell_{ij} (z_i \p_{z_j} -  z_j \p_{z_i}),
\end{equation}
where $b^\ell_{ij}$ indicate the group constants given by $[e_i,e_j] = \sum_{\ell=1}^k b^\ell_{ij} \ve_\ell$, see e.g. \cite[Section 2.5]{Gems}. Here, $z = \sum_{i=1}^m z_i e_i \in \Rm$ represents the variable point in the horizontal layer of the Lie algebra, whereas $\sigma =\sum_{\ell = 1}^k \sigma_\ell \ve_\ell\in \R^k$ indicates the variable point in the center of $\bG$. 

To state the main result in this note, we recall that Jerison and Lee introduced the CR Yamabe problem: \emph{Given a compact, strictly pseudo-convex} CR \emph{manifold, find a choice of contact form for which the Webster-Tanaka pseudo-hermitian scalar curvature is constant}. Such problem was solved in most cases by these authors in a series of important papers, see \cite{JL1}, \cite{JL2}, \cite{JL3} and \cite{JL4}. 
A crucial step in their analysis was the explicit computation of the extremal functions in the case $p=2$ of the Folland-Stein Sobolev embedding in the  Heisenberg group $\Hn$. 
Jerison and Lee made the deep discovery that, up to group translations and dilations, a suitable multiple of the function
\begin{equation}\label{JLminimizer}
u(z,\sigma) = ((1 + |z|^2)^2 + \sigma^2)^{-n/2},
\end{equation}
is the only positive entire solution of the CR Yamabe equation in  $\Hn$
\begin{equation}\label{iwa}
\sul u = - u^{1+\frac{2}{n}},
\end{equation}
where we have denoted with $(z,\sigma)$, $z\in \mathbb{C}^n, \sigma\in \mathbb{R}$, the variable point in $\Hn$. In the subsequent paper \cite{GVduke}, Vassilev and the first named author of this note proved a partial extension of such result. They showed that in a Lie group of Iwasawa type $\bG$, up to left-translations, all nontrivial entire solutions to \eqref{iwa} having partial symmetry must be of the type    
\begin{equation}\label{yamG}
u_\ve(z,\sigma) = \left(\frac{(m+2(k-1))\ve^2}{(\ve^2 + |z|^2)^2 + 16 |\sigma|^2}\right)^{\frac{m+2(k-1)}4},
\end{equation}
for some $\ve>0$.

With all this being said, we consider now the partial differential equation
\begin{equation}\label{yamme}
\mathscr L_{\Phi,\Psi}(u) = - u^{\frac{m+2(k+1)}{m+2(k-1)}},
\end{equation}
where $\fB$ is defined by \eqref{Fel2}. We explicitly note that the exponent $\frac{m+2(k+1)}{m+2(k-1)}$ is critical for the Euler-Lagrange equation of the constrained minimisation problem for \eqref{sob} when $\alpha=1$ and $p=2$. The following is the main result of this note.

\begin{theorem}\label{T:yam}
For $\ve>0$ and $\sigma_0\in \R^k$, consider the function
\begin{equation}\label{yam}
K_{\ve,\sigma_0}(z,\sigma) = \left(\frac{(m+2(k-1))\ve^2}{(\ve^2 + \Phi^0(z)^2)^2 + 16 \Psi^0(\sigma+\sigma_0)^2}\right)^{\frac{m+2(k-1)}4}.
\end{equation}
Then $K_{\ve,\sigma_0}$ is a positive entire solution of the partial differential equation \eqref{yamme} in $\RN$. 
\end{theorem}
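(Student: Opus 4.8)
The plan is to exploit the fact that $K_{\ve,\sigma_0}$ depends on $(z,\sigma)$ only through the dual gauges $\Phi^0(z)$ and $\Psi^0(\sigma+\sigma_0)$, and to reduce the action of $\mathscr L_{\Phi,\Psi}$ on such a ``Wulff--radial'' function to an ordinary Euclidean radial computation, which has already been performed in \cite{GVduke}. First, since the coefficient $\Phi^0(z)^2/4$ is independent of $\sigma$ and $\Delta_\Phi,\Delta_\Psi$ are translation invariant, both $\mathscr L_{\Phi,\Psi}$ and the right-hand side of \eqref{yamme} commute with translations in $\sigma$; hence it suffices to prove the statement for $\sigma_0=0$, and we write $K=K_{\ve,0}$. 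The main ingredient is then the \emph{anisotropic radial formula}: for a Minkowski norm $\Phi$ on $\R^m$ and $f\in C^2(0,\infty)$,
\[
\Delta_\Phi\big(f(\Phi^0(z))\big)=f''(\Phi^0(z))+\frac{m-1}{\Phi^0(z)}\,f'(\Phi^0(z)),\qquad z\neq 0 .
\]
This follows from the Legendre duality between $\tfrac12\Phi^2$ and $\tfrac12(\Phi^0)^2$ — which gives $\Phi(\nabla\Phi^0(\xi))=1$ and $\Phi(\nabla\Phi^0(\xi))\,\nabla\Phi(\nabla\Phi^0(\xi))=\xi/\Phi^0(\xi)$ — together with Euler's relation $\langle\nabla\Phi^0(\xi),\xi\rangle=\Phi^0(\xi)$: one checks that $\Phi(\nabla_z v)\,\nabla\Phi(\nabla_z v)=\frac{f'(\Phi^0(z))}{\Phi^0(z)}\,z$ for $v(z)=f(\Phi^0(z))$, and then takes the Euclidean divergence. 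Applying the same identity in the $\sigma$-layer, a function $K(z,\sigma)=W(\Phi^0(z),\Psi^0(\sigma))$ with $W\in C^2\big((0,\infty)^2\big)$ satisfies, for $z\neq0$ and $\sigma\neq0$,
\[
\mathscr L_{\Phi,\Psi}K(z,\sigma)=\Big(W_{\rho\rho}+\tfrac{m-1}{\rho}W_\rho+\tfrac{\rho^2}{4}\big(W_{\tau\tau}+\tfrac{k-1}{\tau}W_\tau\big)\Big)\Big|_{(\rho,\tau)=(\Phi^0(z),\Psi^0(\sigma))} .
\]

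Next, let $W(\rho,\tau)$ be the function obtained from the right-hand side of \eqref{yam} (with $\sigma_0=0$) by the substitution $\Phi^0(z)\mapsto\rho$, $\Psi^0(\sigma)\mapsto\tau$, and let $U(z,\sigma)=W(|z|,|\sigma|)$ be the associated Euclidean function; this is precisely \eqref{yamG}. The differential expression in the last display, evaluated at $(\rho,\tau)=(|z|,|\sigma|)$, is exactly $\mathscr B U$, where $\mathscr B$ is the Baouendi--Grushin operator \eqref{bg}; and since $U$ is radial in the $z$-variables, the rotation fields $z_i\partial_{z_j}-z_j\partial_{z_i}$ annihilate it, so $\sul U=\mathscr B U$ by \eqref{sul}. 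By the result of \cite{GVduke} recalled above, $U$ solves $\sul U=-U^{\frac{m+2(k+1)}{m+2(k-1)}}$; writing $\mathscr B U$ through the ordinary radial Laplacian in each layer and using that $(|z|,|\sigma|)$ ranges over all of $(0,\infty)^2$, we obtain the pointwise identity
\[
W_{\rho\rho}+\tfrac{m-1}{\rho}W_\rho+\tfrac{\rho^2}{4}\big(W_{\tau\tau}+\tfrac{k-1}{\tau}W_\tau\big)=-\,W^{\frac{m+2(k+1)}{m+2(k-1)}}\qquad\text{on }(0,\infty)^2 .
\]
Since $\tfrac{m+2(k+1)}{m+2(k-1)}=1+\tfrac{4}{m+2(k-1)}$, this identity can alternatively be established by a direct (if lengthy) computation, independently of \cite{GVduke}. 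Evaluating it at $(\rho,\tau)=(\Phi^0(z),\Psi^0(\sigma))$ and inserting into the preceding display yields $\mathscr L_{\Phi,\Psi}K=-K^{\frac{m+2(k+1)}{m+2(k-1)}}$ at every point with $z\neq0$ and $\sigma\neq0$.

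It remains to check that this holds on all of $\RN$, so that $K$ is an \emph{entire} solution and $K_{\ve,\sigma_0}$ solves \eqref{yamme} after undoing the translation of the first step. Since $(\ve^2+\rho^2)^2+16\tau^2\ge\ve^4>0$, the function $W$ is smooth and even in each of $\rho,\tau$, so $K$ is a smooth function of $\Phi^0(z)^2$ and $\Psi^0(\sigma)^2$; hence $\Phi(\nabla_z K)\,\nabla\Phi(\nabla_z K)$ is a vector field of the form $g(\Phi^0(z)^2,\Psi^0(\sigma)^2)\,z$ with $g$ smooth (and similarly in $\sigma$), locally Lipschitz on $\RN$, whose distributional divergence is, by a short computation, a \emph{continuous} function there. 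Thus $\mathscr L_{\Phi,\Psi}K$ is continuous on $\RN$, and since it coincides off the negligible set $\{z=0\}\cup\{\sigma=0\}$ with the continuous function $-K^{\frac{m+2(k+1)}{m+2(k-1)}}$, the equation holds at every point of $\RN$ (classically wherever $\Phi^0$ and $\Psi^0$ are $C^2$, and weakly across the singular set); positivity of $K$ is immediate. The main obstacle in this scheme is the anisotropic radial formula — more precisely the identity $\Phi(\nabla_z v)\,\nabla\Phi(\nabla_z v)=\frac{f'(\Phi^0(z))}{\Phi^0(z)}\,z$ for \emph{decreasing} profiles $f$, which requires handling the behaviour of the positively homogeneous maps $\nabla\Phi$ and $\nabla(\tfrac12\Phi^2)$ under sign changes of the argument; once this is in place, the Yamabe-type identity for $\mathscr L_{\Phi,\Psi}$ is inherited essentially verbatim from the Euclidean Baouendi--Grushin case.
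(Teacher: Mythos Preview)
Your argument is correct and takes a genuinely different route from the paper's. The paper proceeds through an \emph{intertwining equation}: it introduces an auxiliary ``magic'' equation \eqref{magic} for the function $K=(\ve^2+\Phi^0(z)^2)^2+16\Psi^0(\sigma)^2$, verifies it via two separate computations (Lemma~\ref{L:yam3} for the carr\'e du champ and Lemma~\ref{L:yam2} for $\fB(K)$, both resting on Proposition~\ref{P:fin}), and then invokes Proposition~\ref{P:magic} plus a rescaling to reach \eqref{yamme}. You instead observe once and for all that Proposition~\ref{P:fin}, applied in each layer, collapses $\fB$ on any Wulff--radial function $W(\Phi^0(z),\Psi^0(\sigma))$ to the \emph{same} two-variable differential expression that $\mathscr B$ produces on the Euclidean analogue $W(|z|,|\sigma|)$; the Finsler Yamabe identity is then inherited from the isotropic one. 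This is shorter and makes transparent why the anisotropy costs nothing beyond the radial formula, whereas the paper's scheme is more self-contained and, through Proposition~\ref{P:magic}, is already set up for general $\alpha$. One caveat: your appeal to \cite{GVduke} concerns groups of Heisenberg (Iwasawa) type, where $(m,k)$ is constrained by Clifford-module considerations, so for arbitrary $(m,k)$ you must fall back on the direct verification of the two-variable identity for $W$ that you mention; that computation is exactly what the paper carries out (in anisotropic dress) in Lemmas~\ref{L:yam3}--\ref{L:yam2}, so nothing is missing. Your treatment of the singular set $\{z=0\}\cup\{\sigma=0\}$ via the representation $\Phi(\nabla_z K)\nabla\Phi(\nabla_z K)=2H_1(\Phi^0(z)^2,\Psi^0(\sigma)^2)\,z$ is in fact more careful than the paper's proof, which does not address this point explicitly.
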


We conjecture that the functions in \eqref{yam} are the only nontrivial positive solutions to \eqref{yamme}, and that the best constant in \eqref{sob} is attained on them. We plan to return to this conjecture in a future study. 

The present paper is organised as follows. In Section \ref{S:yam} we prove Theorem \ref{T:yam}. In the appendix in Section \ref{S:app} we have collected some of the basic properties of Minkowski norms that are used in this note.


\section{Proof of Theorem \ref{T:yam}}\label{S:yam}

In this section we prove Theorem \ref{T:yam}. We break the proof into several lemmas. 
We begin with some general considerations.
Consider the operator in $\RN$
\[
\fBa(v) = \Delta_{\Phi}(v) + \frac{\Phi^0(z)^{2\alpha}}4  \Delta_\Psi(v).
\]
We make the ansatz that a positive solution $v$ to the Yamabe type equation
\begin{equation}\label{gummybear}
\fBa(v) = - v^{\frac{m+(\alpha+1)k+2}{m+(\alpha+1)k-2}},
\end{equation}
 is an appropriate multiple of the function $u = \rho^{-(m+(\alpha+1)k-2)}$, where for some $\la>0$ the function $\rho:\RN\to [0,\infty)$ solves the \emph{intertwining equation} 
\begin{equation}\label{gummy}
\fBa(\rho^{-(m+(\alpha+1)k-2)}) = - \la\ \rho^{-(m+(\alpha+1)k+2)}.
\end{equation}
Henceforth, we will make frequent use of the following chain rule, whose elementary proof we leave to the reader.

\begin{lemma}\label{L:cra}
Suppose $F\in C^2(\R)$ and $u\in C^2(\RN)$. Then we have
\begin{equation}\label{crgrada}
\Phi(\nabla_z (F\circ u))^2 + \frac{\Phi^0(z)^{2\alpha}}4 \Psi(\nabla_\sigma (F\circ u))^2 = F'(u)^2 \left[\Phi(\nabla_z u)^2 + \frac{\Phi^0(z)^{2\alpha}}4 \Psi(\nabla_\sigma u)^2\right],
\end{equation}
and
\begin{equation}\label{cra}
\fBa(F\circ u) = F'(u) \fBa(u) + F''(u) \left[\Phi(\nabla_z u)^2 + \frac{\Phi^0(z)^{2\alpha}}4 \Psi(\nabla_\sigma u)^2\right].
\end{equation}
\end{lemma}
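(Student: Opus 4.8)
The plan is to deduce both identities from the ordinary chain rule together with the homogeneity properties of the Minkowski norms collected in the appendix. The three facts I will use are: (i) since $\Phi$ is a norm it is absolutely homogeneous, $\Phi(t\xi) = |t|\,\Phi(\xi)$ for every $t\in\mathbb R$, so that $\Phi^2$ is even and $2$-homogeneous in the strong sense $\Phi(c\xi)^2 = c^2\Phi(\xi)^2$ for all $c\in\mathbb R$; (ii) the associated vector field $V_\Phi(\xi) := \Phi(\xi)\nabla\Phi(\xi) = \tfrac12\nabla(\Phi^2)(\xi)$ is $1$-homogeneous and odd, whence $V_\Phi(c\xi) = c\,V_\Phi(\xi)$ for all $c\in\mathbb R$; and (iii) Euler's identity for the $2$-homogeneous function $\Phi^2$, namely $\langle \xi, V_\Phi(\xi)\rangle = \tfrac12\langle\xi,\nabla(\Phi^2)(\xi)\rangle = \Phi(\xi)^2$. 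The same three facts hold for $\Psi$ with $V_\Psi(\eta) := \Psi(\eta)\nabla\Psi(\eta)$, and by definition $\Delta_\Phi(v) = \operatorname{div}_z V_\Phi(\nabla_z v)$, $\Delta_\Psi(v) = \operatorname{div}_\sigma V_\Psi(\nabla_\sigma v)$.

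For the gradient identity \eqref{crgrada} I would simply apply the classical chain rule $\nabla_z(F\circ u) = F'(u)\,\nabla_z u$ and $\nabla_\sigma(F\circ u) = F'(u)\,\nabla_\sigma u$, and then invoke (i) with $c = F'(u)$ to obtain $\Phi(\nabla_z(F\circ u))^2 = F'(u)^2\,\Phi(\nabla_z u)^2$ and $\Psi(\nabla_\sigma(F\circ u))^2 = F'(u)^2\,\Psi(\nabla_\sigma u)^2$. Multiplying the second by $\Phi^0(z)^{2\alpha}/4$, adding, and factoring out $F'(u)^2$ yields \eqref{crgrada}. Note that it is exactly the evenness of $\Phi^2$ and $\Psi^2$ in (i) that makes this work when $F'(u)<0$, which is the relevant sign in the applications, where $F$ is a negative power.

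For the operator identity \eqref{cra} I would compute $\Delta_\Phi(F\circ u)$ first. Using the chain rule and (ii), $V_\Phi(\nabla_z(F\circ u)) = V_\Phi(F'(u)\nabla_z u) = F'(u)\,V_\Phi(\nabla_z u)$; taking $\operatorname{div}_z$ and applying the product rule for the divergence gives $\Delta_\Phi(F\circ u) = F'(u)\,\operatorname{div}_z V_\Phi(\nabla_z u) + \langle \nabla_z(F'(u)), V_\Phi(\nabla_z u)\rangle$. The first summand is $F'(u)\,\Delta_\Phi(u)$, while for the second I substitute $\nabla_z(F'(u)) = F''(u)\,\nabla_z u$ and use Euler's identity (iii) to get $\langle \nabla_z u, V_\Phi(\nabla_z u)\rangle = \Phi(\nabla_z u)^2$, so the second summand equals $F''(u)\,\Phi(\nabla_z u)^2$. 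Hence $\Delta_\Phi(F\circ u) = F'(u)\Delta_\Phi(u) + F''(u)\Phi(\nabla_z u)^2$, and an identical computation in the $\sigma$-variables gives $\Delta_\Psi(F\circ u) = F'(u)\Delta_\Psi(u) + F''(u)\Psi(\nabla_\sigma u)^2$. Since the coefficient $\Phi^0(z)^{2\alpha}/4$ is independent of $\sigma$, it passes freely through $\operatorname{div}_\sigma$; multiplying the $\Psi$-identity by it and adding to the $\Phi$-identity produces exactly $\fBa(F\circ u) = F'(u)\fBa(u) + F''(u)[\Phi(\nabla_z u)^2 + \tfrac{\Phi^0(z)^{2\alpha}}4\Psi(\nabla_\sigma u)^2]$, which is \eqref{cra}.

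The computation is elementary and the only point requiring care is regularity: $V_\Phi = \tfrac12\nabla(\Phi^2)$ is of class $C^1$ only on $\mathbb R^m\setminus\{0\}$ (extending continuously by $V_\Phi(0)=0$), so the divergence manipulations above are justified pointwise on the open set where $\nabla_z u\neq 0$, respectively $\nabla_\sigma u\neq 0$; on the complement one interprets the Finsler Laplacians in the usual weak sense, and the identities persist by continuity. I expect this regularity bookkeeping, rather than any genuine analytic difficulty, to be the only mild obstacle; the heart of the matter is the homogeneity bookkeeping, degree $2$ for $\Phi^2$ and degree $1$ for $V_\Phi$, together with Euler's relation.
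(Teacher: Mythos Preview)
Your proof is correct and is exactly the elementary chain-rule/homogeneity computation the authors have in mind; in fact the paper does not give a proof of this lemma at all, stating only that it is an elementary chain rule ``whose elementary proof we leave to the reader.'' Your write-up, including the care about the sign of $F'(u)$ via the evenness of $\Phi^2,\Psi^2$ and the remark on regularity at points where $\nabla_z u$ or $\nabla_\sigma u$ vanish, fills in precisely what was left implicit.
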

If we consider the function
\begin{equation}\label{F}
F(t) = t^{-(m+(\alpha+1)k-2)},
\end{equation}
then it is clear that 
\[
\begin{cases}
F'(t) = -(m+(\alpha+1)k-2) t^{-(m+(\alpha+1)k-1)},
\\
F''(t) = (m+(\alpha+1)k-2)(m+(\alpha+1)k-1) t^{-(m+(\alpha+1)k)},
\end{cases}
\]
and therefore $F(t)$ solves the ODE
\begin{equation}\label{ode}
F''(t) + \frac{m+(\alpha+1)k-1}t F'(t) = 0.
\end{equation}
Applying Lemma \ref{L:cra} to the function $F\circ \rho = \rho^{-(m+(\alpha+1)k-2)}$, we thus find
\begin{equation}\label{crav}
\fBa(\rho^{-(m+(\alpha+1)k-2)}) = F'(\rho) \fBa(\rho) + F''(\rho) \left[\Phi(\nabla_z \rho)^2 + \frac{\Phi^0(z)^{2\alpha}}4 \Psi(\nabla_\sigma \rho)^2\right].
\end{equation}
If we now define a function $K$ by the equation
\begin{equation}\label{rhoa}
K(z,\sigma) = \rho(z,\sigma)^{2(\alpha+1)},
\end{equation}
and again apply Lemma \ref{L:cra} with $F(t) = t^{\frac{1}{2(\alpha+1)}}$, we easily obtain the following.

\begin{lemma}\label{L:yam1a}
We have
\begin{equation}\label{gradrhoa}
\Phi(\nabla_z \rho)^2 + \frac{\Phi^0(z)^{2\alpha}}4 \Psi(\nabla_\sigma \rho)^2 = \frac{1}{4(\alpha+1)^2\rho^{4\alpha+2}} \left[\Phi(\nabla_z K)^2 + \frac{\Phi^0(z)^{2\alpha}}4 \Psi(\nabla_\sigma K)^2\right],
\end{equation}
and
\begin{equation}\label{yam2a}
\fBa(\rho) = \frac{1}{2(\alpha+1)\rho^{2\alpha+1}} \fBa(K) - \frac{2\alpha+1}{4(\alpha+1)^2 \rho^{4\alpha+3}} \left[\Phi(\nabla_z K)^2 + \frac{\Phi^0(z)^{2\alpha}}4 \Psi(\nabla_\sigma K)^2\right].
\end{equation}
\end{lemma}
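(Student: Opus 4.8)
The plan is to invoke Lemma \ref{L:cra} a second time, now with the choice $F(t) = t^{\frac{1}{2(\alpha+1)}}$ and $u = K$, so that $F\circ K = K^{\frac{1}{2(\alpha+1)}} = \rho$ by \eqref{rhoa} (the function $F$ is $C^2$ on $(0,\infty)$, and since $K = \rho^{2(\alpha+1)}$ this causes no trouble away from the set $\{\rho = 0\}$, exactly as in the previous application that produced \eqref{crav}). With this $F$, identities \eqref{crgrada} and \eqref{cra} become, after observing that both left-hand sides are then expressions in $\rho$,
\[
\Phi(\nabla_z \rho)^2 + \frac{\Phi^0(z)^{2\alpha}}4 \Psi(\nabla_\sigma \rho)^2 = F'(K)^2 \left[\Phi(\nabla_z K)^2 + \frac{\Phi^0(z)^{2\alpha}}4 \Psi(\nabla_\sigma K)^2\right],
\]
and
\[
\fBa(\rho) = F'(K)\,\fBa(K) + F''(K)\left[\Phi(\nabla_z K)^2 + \frac{\Phi^0(z)^{2\alpha}}4 \Psi(\nabla_\sigma K)^2\right].
\]
Thus the whole content of the lemma is reduced to re-expressing $F'(K)$ and $F''(K)$ in terms of $\rho$.

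First I would compute $F'(t) = \frac{1}{2(\alpha+1)}\, t^{\frac{1}{2(\alpha+1)}-1}$ and $F''(t) = -\frac{2\alpha+1}{4(\alpha+1)^2}\, t^{\frac{1}{2(\alpha+1)}-2}$, the constant in $F''$ coming from $\frac{1}{2(\alpha+1)}\big(\frac{1}{2(\alpha+1)}-1\big) = -\frac{2\alpha+1}{4(\alpha+1)^2}$. Then, using $K = \rho^{2(\alpha+1)}$ and substituting $t = K$, I simplify the powers of $\rho$: since $K^{\frac{1}{2(\alpha+1)}-1} = \rho^{1-2(\alpha+1)} = \rho^{-(2\alpha+1)}$, one obtains $F'(K) = \dfrac{1}{2(\alpha+1)\rho^{2\alpha+1}}$, hence $F'(K)^2 = \dfrac{1}{4(\alpha+1)^2\rho^{4\alpha+2}}$; and since $K^{\frac{1}{2(\alpha+1)}-2} = \rho^{1-4(\alpha+1)} = \rho^{-(4\alpha+3)}$, one obtains $F''(K) = -\dfrac{2\alpha+1}{4(\alpha+1)^2\rho^{4\alpha+3}}$. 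Inserting these three quantities into the two displayed identities gives precisely \eqref{gradrhoa} and \eqref{yam2a}.

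There is no genuine obstacle: the argument is a mechanical application of the chain rule of Lemma \ref{L:cra}, and the only place demanding care is the bookkeeping of the exponents of $\rho$ — in particular pinning down the constant $-(2\alpha+1)/(4(\alpha+1)^2)$ and the power $\rho^{4\alpha+3}$, since these are exactly the terms that will be combined with \eqref{crav} in the subsequent intertwining computation and any slip there would propagate through the remainder of the proof.
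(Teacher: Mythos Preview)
Your proof is correct and follows exactly the approach indicated in the paper: apply Lemma \ref{L:cra} with $F(t) = t^{\frac{1}{2(\alpha+1)}}$ and $u = K$, then rewrite $F'(K)$, $F'(K)^2$, and $F''(K)$ in terms of $\rho$ via \eqref{rhoa}. The paper leaves the computation to the reader, and your bookkeeping of the constants and exponents is accurate.
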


To continue our discussion, we observe that substitution of \eqref{gradrhoa}, \eqref{yam2a} in \eqref{crav}, gives
\begin{align}\label{gummyy}
& \fBa(\rho^{-(m+(\alpha+1)k-2)}) 
\\
&= F'(\rho) \left\{\frac{1}{2(\alpha+1)\rho^{2\alpha+1}} \fBa(K) - \frac{2\alpha+1}{4(\alpha+1)^2 \rho^{4\alpha+3}} \left[\Phi(\nabla_z K)^2 + \frac{\Phi^0(z)^{2\alpha}}4 \Psi(\nabla_\sigma K)^2\right]\right\}
\notag\\
& + F''(\rho) \frac{1}{4(\alpha+1)^2\rho^{4\alpha+2}} \left[\Phi(\nabla_z K)^2 + \frac{\Phi^0(z)^{2\alpha}}4 \Psi(\nabla_\sigma K)^2\right]
\notag\\
& = \frac{F'(\rho) }{2(\alpha+1)\rho^{2\alpha+1}} \fBa(K) + \frac{1}{4(\alpha+1)^2\rho^{4\alpha+2}} \left(F''(\rho) - \frac{2\alpha+1}{\rho} F'(\rho) \right)\left[\Phi(\nabla_z K)^2 + \frac{\Phi^0(z)^{2\alpha}}4 \Psi(\nabla_\sigma K)^2\right]
\notag\\
& = \frac{1}{4(\alpha+1)^2\rho^{4\alpha+2}}\bigg\{2(\alpha+1)\frac{F'(\rho)}{\rho} \ K\ \fBa(K)
\notag\\
&+ \left(F''(\rho) - \frac{2\alpha+1}{\rho} F'(\rho) \right)\left[\Phi(\nabla_z K)^2 + \frac{\Phi^0(z)^{2\alpha}}4 \Psi(\nabla_\sigma K)^2\right]\bigg\}.
\notag
\end{align}

We have the following.

\begin{proposition}\label{P:magic}
Suppose that, for some $A>0$, there exist a function $K>0$ that satisfy the equation
\begin{equation}\label{magic}
2(\alpha+1) K\ \fBa(K) =  (m+(\alpha+1)k+2\alpha) \left[\Phi(\nabla_z K)^2 + \frac{\Phi^0(z)^{2\alpha}}4 \Psi(\nabla_\sigma K)^2\right] + A\ K^{\frac{2\alpha}{\alpha+1}}.
\end{equation}
Then the function $\rho>0$, connected to $K$ by \eqref{rhoa}, solves the intertwining equation \eqref{gummy}, with $\la = \frac{A(m+(\alpha+1)k-2)}{4(\alpha+1)^2}$.
\end{proposition}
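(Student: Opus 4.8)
The substantive work has already been carried out in the chain of equalities culminating in \eqref{gummyy}, which (via Lemma \ref{L:cra} applied twice) expresses $\fBa\big(\rho^{-(m+(\alpha+1)k-2)}\big)$ purely in terms of $\fBa(K)$, the anisotropic gradient quantity $\Phi(\nabla_z K)^2 + \frac{\Phi^0(z)^{2\alpha}}{4}\Psi(\nabla_\sigma K)^2$, and the values $F'(\rho)$, $F''(\rho)$ of the profile $F$ in \eqref{F}. The plan is therefore simply to substitute the explicit $F$ into \eqref{gummyy}, recognize the emerging bracket as the left-hand side of \eqref{magic}, and then match the homogeneities in $\rho$ by means of \eqref{rhoa}.

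First I would set $\nu := m+(\alpha+1)k-2$, so that $F(t) = t^{-\nu}$ and hence $F'(\rho) = -\nu\,\rho^{-\nu-1}$, $F''(\rho) = \nu(\nu+1)\,\rho^{-\nu-2}$; equivalently one may invoke the ODE \eqref{ode} in the form $F''(\rho) = -\frac{\nu+1}{\rho}F'(\rho)$. The two coefficients appearing in \eqref{gummyy} then evaluate to
\[
2(\alpha+1)\,\frac{F'(\rho)}{\rho} = -2(\alpha+1)\,\nu\,\rho^{-\nu-2},\qquad
F''(\rho) - \frac{2\alpha+1}{\rho}F'(\rho) = \nu\,(m+(\alpha+1)k+2\alpha)\,\rho^{-\nu-2},
\]
the last equality using only $\nu + 2\alpha + 2 = m+(\alpha+1)k+2\alpha$. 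Pulling the common factor $-\nu\,\rho^{-\nu-2}$ out of the braces in \eqref{gummyy} leaves exactly
\[
2(\alpha+1)\,K\,\fBa(K) - (m+(\alpha+1)k+2\alpha)\left[\Phi(\nabla_z K)^2 + \frac{\Phi^0(z)^{2\alpha}}{4}\Psi(\nabla_\sigma K)^2\right],
\]
which by the hypothesis \eqref{magic} equals $A\,K^{\frac{2\alpha}{\alpha+1}}$.

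To conclude, I would insert $K = \rho^{2(\alpha+1)}$ from \eqref{rhoa}, so that $K^{\frac{2\alpha}{\alpha+1}} = \rho^{4\alpha}$, and collect powers of $\rho$. Then \eqref{gummyy} collapses to
\[
\fBa\big(\rho^{-\nu}\big) = \frac{-\nu\,\rho^{-\nu-2}}{4(\alpha+1)^2\,\rho^{4\alpha+2}}\,A\,\rho^{4\alpha} = -\frac{\nu A}{4(\alpha+1)^2}\,\rho^{-\nu-4},
\]
and since $\nu + 4 = m+(\alpha+1)k+2$ this is precisely the intertwining equation \eqref{gummy} with $\lambda = \frac{\nu A}{4(\alpha+1)^2} = \frac{A\,(m+(\alpha+1)k-2)}{4(\alpha+1)^2}$, as asserted.

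I do not expect any genuine obstacle: every step is the bookkeeping already prepared by \eqref{gummyy}. The one point worth emphasizing is that the exponent $\frac{2\alpha}{\alpha+1}$ in \eqref{magic} is forced — it is exactly the power for which $K^{\frac{2\alpha}{\alpha+1}}$ equals $\rho^{4\alpha}$, so that it cancels the $\rho^{-4\alpha-2}$ prefactor against the $\rho^{-\nu-2}$ coming from the derivatives of $F$, leaving the homogeneity $\rho^{-(\nu+4)}$ demanded on the right-hand side of \eqref{gummy}; the constant $\lambda$ is then automatically the stated one. In short, the content of the proposition is that \eqref{magic} is the correct reformulation of the intertwining equation \eqref{gummy} in terms of $K$, and the proof is the scaling check above.
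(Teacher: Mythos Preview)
Your proof is correct and follows essentially the same route as the paper: both plug into the identity \eqref{gummyy}, use the hypothesis \eqref{magic}, and reduce by means of $K^{\frac{2\alpha}{\alpha+1}}=\rho^{4\alpha}$. The only cosmetic difference is that the paper substitutes \eqref{magic} first and then invokes the ODE \eqref{ode} to cancel the gradient term, whereas you compute $F'$, $F''$ explicitly and factor out $-\nu\rho^{-\nu-2}$ to recognise the left-hand side of \eqref{magic}; these are the same algebra organised in reverse order.
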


\begin{proof}
Let $K>0$ be a solution to \eqref{magic}. With $F(t)$ as in \eqref{F}, we insert the equation \eqref{magic} in \eqref{gummyy}. Keeping in mind that \eqref{rhoa} gives $K^{\frac{2\alpha}{\alpha+1}} = \rho(z,\sigma)^{4\alpha}$ , we find that  the function $\rho$ satisfies the equation
\begin{align*}
& \fBa(\rho^{-(m+(\alpha+1)k-2)}) 
\\
& = \frac{1}{4(\alpha+1)^2\rho^{4\alpha+2}} \bigg\{\left(F''(\rho) + \frac{m+(\alpha+1)k-1}{\rho} F'(\rho)\right)\left[\Phi(\nabla_z K)^2 + \frac{\Phi^0(z)^{2\alpha}}4 \Psi(\nabla_\sigma K)^2\right]
\\
& + A F'(\rho) \rho^{4\alpha-1}\bigg\} 
\\
& =  \frac{A}{4(\alpha+1)^2} \frac{F'(\rho)}{\rho^{3}}
 =  -\frac{A(m+(\alpha+1)k-2)}{4(\alpha+1)^2} \rho^{-(m+(\alpha+1)k+2)},
\end{align*}
where in the second to the last equality we have used \eqref{ode} to kill the enemy, i.e., the term 
\[
\left(F''(\rho) + \frac{m+(\alpha+1)k-1}{\rho} F'(\rho)\right)\left[\Phi(\nabla_z K)^2 + \frac{\Phi^0(z)^{2\alpha}}4 \Psi(\nabla_\sigma K)^2\right].
\]
This proves that \eqref{gummy} holds with $\la = \frac{A(m+(\alpha+1)k-2)}{4(\alpha+1)^2}$.

\end{proof}

Summarising, we have proved that the key to solving the intertwining equation \eqref{gummy} is finding a function $K$ which satisfies \eqref{magic}. Once such $K$ is found, the function $\rho$ connected to $K$ by \eqref{rhoa}, will satisfy \eqref{gummy}. 

At this point, we specialise our discussion to the case $\alpha=1$, and show how to construct $K$ in such case.
We note explicitly that when $\alpha=1$ the equation \eqref{magic} reads
\begin{equation}\label{magicone}
4  \fB(K) =  \frac{m+2k+2}K \left[\Phi(\nabla_z K)^2 + \frac{\Phi^0(z)^{2}}4 \Psi(\nabla_\sigma K)^2\right] + A,
\end{equation}
where $\fB$ is as in \eqref{Fel2}.
We have the following result.

\begin{theorem}\label{T:main}
For $\ve>0$ we define a function $K:\RN\to (0,\infty)$ by the formula
\begin{equation}\label{rho}
K(z,\sigma) = (\ve^2 + \Phi^0(z)^2)^2 + 16 \Psi^0(\sigma)^2.
\end{equation}
Then $K$ solves the equation \eqref{magicone} with $A = 16 m \ve^2$.
\end{theorem}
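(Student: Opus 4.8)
The strategy is a direct verification: compute all the quantities appearing in \eqref{magicone} for the explicit function $K$ in \eqref{rho}, and check that the identity holds with $A = 16m\ve^2$. The key simplification is that $K$ depends on $z$ only through $\Phi^0(z)$ and on $\sigma$ only through $\Psi^0(\sigma)$, so I will reduce everything to scalar computations involving these two gauges and then invoke the Finsler-geometric identities from the appendix (Section \ref{S:app}) — principally the Euler relation $\langle \nabla\Phi^0(z), z\rangle = \Phi^0(z)$ and its homogeneity-degree-$0$ corollary for $\nabla\Phi^0$, together with the eikonal-type identity $\Phi(\nabla\Phi^0(z)) = 1$ (and likewise for $\Psi,\Psi^0$), which are exactly the classical dual-norm facts.

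\textbf{Step 1: the gradient term.} Write $K = \Psi_1^2 + 16\,\Psi_2$ where $\Psi_1 = \ve^2 + \Phi^0(z)^2$ and $\Psi_2 = \Psi^0(\sigma)^2$, so that $\nabla_z K = 2\Psi_1\cdot 2\Phi^0(z)\nabla\Phi^0(z) = 4\Psi_1\Phi^0(z)\nabla\Phi^0(z)$ and $\nabla_\sigma K = 32\,\Psi^0(\sigma)\nabla\Psi^0(\sigma)$. Using $\Phi(\nabla\Phi^0(z)) = 1$ and the $1$-homogeneity of $\Phi$, we get $\Phi(\nabla_z K)^2 = 16\Psi_1^2\Phi^0(z)^2$; similarly $\Psi(\nabla_\sigma K)^2 = 1024\,\Psi^0(\sigma)^2$. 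Hence
\[
\Phi(\nabla_z K)^2 + \frac{\Phi^0(z)^2}{4}\Psi(\nabla_\sigma K)^2 = 16\Phi^0(z)^2\Big(\Psi_1^2 + 16\,\Psi^0(\sigma)^2\Big) = 16\,\Phi^0(z)^2\,K,
\]
so the first term on the right of \eqref{magicone} contributes exactly $16(m+2k+2)\Phi^0(z)^2$.

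\textbf{Step 2: the operator term.} I need $\Delta_\Phi K$ and $\Delta_\Psi K$. For a radial-in-gauge function $K = g(\Phi^0(z)^2)$ in the $z$-variables (here with the $\sigma$-part frozen, $g(t) = (\ve^2+t)^2 + 16\Psi^0(\sigma)^2$), the chain rule for the Finsler Laplacian and the homogeneity relations give $\Delta_\Phi\big(h(\Phi^0(z))\big) = h''(\Phi^0(z)) + \frac{m-1}{\Phi^0(z)}h'(\Phi^0(z))$ — the same form as the Euclidean radial Laplacian, because $\Phi(\nabla\Phi^0)=1$ and $\operatorname{div}_z\!\big(\Phi^0(z)^{-1}\nabla\Phi^0(z)\,\Phi^0(z)^{?}\big)$ reduces, via Euler's relation and degree counting, to the Euclidean expression. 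Applying this to $K$ as a function of $r = \Phi^0(z)$: with $K = (\ve^2+r^2)^2 + \text{const}$ one computes $\partial_r K = 4r(\ve^2+r^2)$, $\partial_r^2 K = 4(\ve^2 + 3r^2)$, so $\Delta_\Phi K = 4(\ve^2+3r^2) + \tfrac{m-1}{r}\cdot 4r(\ve^2+r^2) = 4\big((m+2)r^2 + m\ve^2\big)$. Likewise in $\sigma$: with $s = \Psi^0(\sigma)$, $K = 16s^2 + \text{const}$ gives $\Delta_\Psi K = 32 + \tfrac{k-1}{s}\cdot 32 s = 32k$. Therefore
\[
4\,\fB(K) = 4\Big(\Delta_\Phi K + \tfrac{\Phi^0(z)^2}{4}\Delta_\Psi K\Big) = 16\big((m+2)\Phi^0(z)^2 + m\ve^2\big) + 32k\,\Phi^0(z)^2.
\]

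\textbf{Step 3: matching.} Subtracting, the left side minus the first right-side term of \eqref{magicone} equals $16(m+2)\Phi^0(z)^2 + 16m\ve^2 + 32k\Phi^0(z)^2 - 16(m+2k+2)\Phi^0(z)^2 = 16m\ve^2$, which is the constant $A$, independent of $(z,\sigma)$ as required. The main obstacle is Step 2: carefully justifying that the Finsler Laplacian of a function of $\Phi^0(z)$ alone collapses to the Euclidean-type radial formula $h'' + \frac{m-1}{r}h'$. This hinges on the facts $\Phi(\nabla\Phi^0)\equiv 1$, $\nabla\Phi(\nabla\Phi^0(z)) = \Phi^0(z)^{-1}z$ (the inverse Legendre map identity), and $\operatorname{div}_z\!\big(\Phi^0(z)^{-1}z\big) = (m-1)\Phi^0(z)^{-1}$ — all of which are among the standard Minkowski-norm properties recorded in the appendix; I would cite them there and avoid re-deriving them inline. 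The cross term $\Phi^0(z)^2 \Delta_\Psi K$ is handled identically since $\Delta_\Psi$ acts only in $\sigma$ and $\Phi^0(z)^2$ is a $\sigma$-constant.
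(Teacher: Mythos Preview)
Your proposal is correct and follows essentially the same route as the paper: the paper splits the verification into Lemma \ref{L:yam3} (your Step 1, yielding $16\,\Phi^0(z)^2 K$) and Lemma \ref{L:yam2} (your Step 2, yielding $\Delta_\Phi K = 4(m+2)\Phi^0(z)^2 + 4m\ve^2$ and $\Delta_\Psi K = 32k$), both resting on the radial formula for the Finsler Laplacian recorded as Proposition \ref{P:fin}, and then combines them exactly as in your Step 3. The only cosmetic difference is that the paper quotes Proposition \ref{P:fin} directly rather than sketching its proof via the Legendre-inverse identity $\nabla\Phi(\nabla\Phi^0(z)) = \Phi^0(z)^{-1}z$ as you do.
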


The proof of Theorem \ref{T:main} will follow from the next two lemmas.

\begin{lemma}\label{L:yam3}
With $K$ as in \eqref{rho} of Theorem \ref{T:main}, one has
\[
\Phi(\nabla_z K)^2 + \frac{\Phi^0(z)^{2}}4 \Psi(\nabla_\sigma K)^2 = 16\ \Phi^0(z)^2\ K.
\]
\end{lemma}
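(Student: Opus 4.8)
The plan is to compute the left-hand side directly using the chain rule from Lemma~\ref{L:cra} and the properties of Minkowski norms collected in the appendix. Write $K = f(\Phi^0(z)) + 16\,(\Psi^0(\sigma))^2$ where $f(r) = (\ve^2 + r^2)^2$. Since the two layers decouple, I would treat the $z$-gradient and the $\sigma$-gradient separately: $\nabla_z K = f'(\Phi^0(z))\,\nabla_z \Phi^0(z)$ and $\nabla_\sigma K = 32\,\Psi^0(\sigma)\,\nabla_\sigma \Psi^0(\sigma)$.

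The key structural fact is the homogeneity/duality identity for Minkowski norms: if $\Phi^0$ is $1$-homogeneous then $\Phi(\nabla_z \Phi^0(z)) = 1$ for $z \neq 0$ (the gradient of the dual norm lands on the unit sphere of the original norm — this is the standard Legendre-duality relation, presumably recorded in the appendix). Using the $1$-homogeneity of $\Phi$ in its argument, $\Phi(\nabla_z K)^2 = f'(\Phi^0(z))^2 \,\Phi(\nabla_z \Phi^0(z))^2 = f'(\Phi^0(z))^2$. Likewise $\Psi(\nabla_\sigma K)^2 = (32\,\Psi^0(\sigma))^2\,\Psi(\nabla_\sigma \Psi^0(\sigma))^2 = 1024\,\Psi^0(\sigma)^2$. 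With $\phi := \Phi^0(z)$ and $\psi := \Psi^0(\sigma)$, we get
\begin{equation*}
\Phi(\nabla_z K)^2 + \frac{\phi^2}{4}\,\Psi(\nabla_\sigma K)^2 = f'(\phi)^2 + 256\,\phi^2\,\psi^2 = 16\phi^2(\ve^2+\phi^2)^2 + 256\,\phi^2\,\psi^2,
\end{equation*}
since $f'(\phi) = 4\phi(\ve^2+\phi^2)$, so $f'(\phi)^2 = 16\phi^2(\ve^2+\phi^2)^2$. Factoring out $16\phi^2$ leaves $16\phi^2\big[(\ve^2+\phi^2)^2 + 16\psi^2\big] = 16\,\Phi^0(z)^2\,K$, which is exactly the claimed identity.

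The main obstacle is purely the bookkeeping of the norm identities: I need $\Phi\big(\nabla(\Phi^0)\big) \equiv 1$ (and the $\Psi$ analogue), the $1$-homogeneity of $\Phi$ used to pull scalar factors out of $\Phi(\lambda\, v) = |\lambda|\,\Phi(v)$, and I should be slightly careful about the set where $\Phi^0(z) = 0$ or $\Psi^0(\sigma) = 0$, i.e. $z = 0$ or $\sigma = 0$. On the locus $z = 0$ both sides vanish (the left because $\nabla_z K = 0$ there and the factor $\frac{\phi^2}{4}$ kills the second term, the right because of the explicit $\Phi^0(z)^2$ factor), so the identity extends by continuity; when $\sigma = 0$ but $z \neq 0$ the $\Psi$-term simply drops and the computation goes through verbatim. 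Modulo these edge cases — all handled by the regularity hypotheses on $\Phi^2, \Psi^2$ and a density/continuity argument — the verification is a one-line factorization once the duality relations are invoked.
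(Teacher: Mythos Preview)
Your proof is correct and follows essentially the same route as the paper: compute $\nabla_z K$ and $\nabla_\sigma K$ via the chain rule, invoke the duality identity $\Phi(\nabla\Phi^0)=\Psi(\nabla\Psi^0)=1$ from \eqref{Finabla} together with the $1$-homogeneity of $\Phi,\Psi$, and then factor $16\,\Phi^0(z)^2$ out of the resulting sum. Your discussion of the degenerate loci $z=0$ and $\sigma=0$ is a nice addition that the paper leaves implicit.
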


\begin{proof}
For fixed $z\in \Rm$ and $\sigma\in \R^k$, consider the functions
\[
f(s) = (\ve^2 + s^2)^2 + 16 \Psi^0(\sigma)^2,\ \ \ \ \ \ \ g(t) = (\ve^2 + \Phi^0(z)^2)^2 + 16 t^2.
\]
Clearly, 
\[
K(z,\sigma) = f(\Phi^0(z)) = g(\Psi^0(\sigma)),
\]
and we have
\[
f'(s) = 4 s (\ve^2 + s^2),\ \ \ \ \  f''(s) = 4 (\ve^2 + s^2) + 8 s^2,\ \ \ \ \ g'(t) = 32 t,\ \ \ \ \ \  g''(t) = 32.
\]
From the chain rule one has
\[
\nabla_z K = f'(\Phi^0(z)) \nabla_z \Phi^0(z),\ \ \ \ \ \nabla_\sigma K = g'(\Psi^0(\sigma)) \nabla_\sigma \Psi^0(\sigma),
\]
and therefore the homogeneity of $\Phi$ and $\Psi$ implies
\[
\Phi(\nabla_z K)^2 = f'(\Phi^0(z))^2 \Phi(\nabla_z \Phi^0(z))^2 = 16 \Phi^0(z)^2 (\ve^2 + \Phi^0(z)^2)^2,\ \ \ \ \Psi(\nabla_\sigma K)^2 = 32^2 \Psi^0(\sigma)^2,
\]
where we have used the key identity \eqref{Finabla} from the Appendix.
These equations give
\begin{align*}
& \Phi(\nabla_z K)^2 + \frac{\Phi^0(z)^{2}}4 \Psi(\nabla_\sigma K)^2 = 16 \Phi^0(z)^2 (\ve^2 + \Phi^0(z)^2)^2 +  \frac{\Phi^0(z)^{2}}4 32^2 \Psi^0(\sigma)^2
\\
& = 16 \Phi^0(z)^{2} \left((\ve^2 + \Phi^0(z)^2)^2 + 16 \Psi^0(\sigma)^2\right) = 16\ \Phi^0(z)^{2}\ K.
\end{align*}

\end{proof}

\begin{lemma}\label{L:yam2}
We have 
\begin{equation}\label{delphiK}
\Delta_\Phi(K) = (4m+8) \Phi^0(z)^2 + 4 m \ve^2,\ \ \ \ \ \ \ \ \Delta_\Psi(K) = 32 k.
\end{equation}
As a consequence,
\begin{equation}\label{fBK}
\fB(K) = 4 (m+2k+2) \Phi^0(z)^2 + 4 m \ve^2.
\end{equation}
\end{lemma}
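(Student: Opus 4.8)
The plan is to compute the two Finsler Laplacians $\Delta_\Phi(K)$ and $\Delta_\Psi(K)$ separately, using the structure $K(z,\sigma) = f(\Phi^0(z)) = g(\Psi^0(\sigma))$ already exploited in the proof of Lemma \ref{L:yam3}, and then assemble \eqref{fBK} by inserting \eqref{delphiK} into the definition \eqref{Fel2} of $\fB$. For $\Delta_\Phi$, I would use the chain rule: since $K = f\circ \Phi^0$ in the $z$-variable with $f(s) = (\ve^2+s^2)^2 + 16\Psi^0(\sigma)^2$, we have $\nabla_z\Phi(\nabla_z K) = f'(\Phi^0(z))\,\nabla \Phi(\nabla_z\Phi^0(z))$ after using positive $1$-homogeneity of $\Phi$ (so $\Phi(\nabla_z K) = \Phi(f'(\Phi^0)\nabla_z\Phi^0) = f'(\Phi^0)\Phi(\nabla_z\Phi^0)$, noting $f'\ge 0$). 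Thus $\Delta_\Phi(K) = \operatorname{div}_z\big(\Phi(\nabla_z K)\nabla\Phi(\nabla_z K)\big)$ should reduce to an expression involving $f'$, $f''$, $\Phi^0$, and the quantity $\Delta_\Phi(\Phi^0)$ together with $\Phi(\nabla_z\Phi^0)^2 = 1$ (the key identity \eqref{Finabla}). The same scheme applies to $\Delta_\Psi$ with $g(t) = (\ve^2+\Phi^0(z)^2)^2 + 16t^2$, $g' = 32t$, $g'' = 32$.

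Concretely, the one-dimensional reduction I expect is the radial-type formula
\[
\Delta_\Phi(f\circ\Phi^0) = f''(\Phi^0)\,\Phi(\nabla_z\Phi^0)^2 + f'(\Phi^0)\,\Delta_\Phi(\Phi^0),
\]
which is just \eqref{cra} of Lemma \ref{L:cra} specialised to $\alpha$ irrelevant in the pure $z$-variable (or proved directly). Using \eqref{Finabla} to get $\Phi(\nabla_z\Phi^0)^2 = 1$ and the standard fact that $\Delta_\Phi(\Phi^0) = \tfrac{m-1}{\Phi^0}$ (the Finsler analogue of $\Delta|z| = (m-1)/|z|$, since $\Phi^0$ is the $\Phi$-distance to the origin and $\Delta_\Phi$ is $0$-homogeneous acting on it), I would obtain
\[
\Delta_\Phi(K) = f''(\Phi^0) + \frac{m-1}{\Phi^0}f'(\Phi^0)
= \big(4(\ve^2+(\Phi^0)^2) + 8(\Phi^0)^2\big) + \frac{m-1}{\Phi^0}\cdot 4\Phi^0(\ve^2+(\Phi^0)^2),
\]
and collecting terms gives $4(\Phi^0)^2 + 8(\Phi^0)^2 + 4m\ve^2 + 4(m-1)(\Phi^0)^2 = (4m+8)(\Phi^0)^2 + 4m\ve^2$, as claimed. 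Similarly $\Delta_\Psi(K) = g''(\Psi^0) + \tfrac{k-1}{\Psi^0}g'(\Psi^0) = 32 + \tfrac{k-1}{\Psi^0}\cdot 32\Psi^0 = 32k$. Finally, $\fB(K) = \Delta_\Phi(K) + \tfrac{\Phi^0(z)^2}{4}\Delta_\Psi(K) = (4m+8)(\Phi^0)^2 + 4m\ve^2 + \tfrac{(\Phi^0)^2}{4}\cdot 32k = 4(m+2k+2)(\Phi^0)^2 + 4m\ve^2$.

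The main obstacle — and the only nontrivial input — is justifying the two "radial" facts $\Phi(\nabla_z\Phi^0)^2 = 1$ and $\Delta_\Phi(\Phi^0) = (m-1)/\Phi^0$ in the Finsler setting, i.e. being careful that the composition/divergence manipulations are legitimate for the merely $C^2$ (off the origin) Minkowski norms at hand, and that the identity $\nabla\Phi(\nabla\Phi^0) = \nabla\Phi^0 / \Phi^0 \cdot(\text{something})$ used implicitly is exactly \eqref{Finabla} from the Appendix; once those are in hand, everything else is the elementary one-variable computation above. I would therefore cite \eqref{Finabla} for the gradient identity and derive $\Delta_\Phi(\Phi^0) = (m-1)/\Phi^0$ either from the Appendix or from a short direct computation using the $0$-homogeneity of $\nabla\Phi^0$ and Euler's relation, then plug in $f$, $g$ and simplify.
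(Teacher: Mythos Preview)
Your proposal is correct and follows essentially the same route as the paper: the paper invokes Proposition~\ref{P:fin} from the Appendix, which is precisely the radial formula $\Delta_M(k\circ M^0)=k''(M^0)+\tfrac{n-1}{M^0}k'(M^0)$ that you set out to re-derive via the chain rule together with $\Phi(\nabla\Phi^0)=1$ and $\Delta_\Phi(\Phi^0)=(m-1)/\Phi^0$. In other words, you are reproving Proposition~\ref{P:fin} inline rather than citing it; after that, your computations with $f$, $g$ and the assembly of \eqref{fBK} coincide with the paper's verbatim.
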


\begin{proof}
To prove \eqref{delphiK} we use the functions $f(s)$ and $g(t)$, introduced in the proof of Lemma \ref{L:yam3}, in combination with Proposition \ref{P:fin} below. Applying the latter twice (one time with $M = \Phi$, the other with $M = \Psi$), we find
\begin{align*}
\Delta_\Phi(K) & = f''(\Phi^0(z)) + \frac{m-1}{\Phi^0(z)} f'(\Phi^0(z)) 
\\
& = 4 (\ve^2 + \Phi^0(z)^2) + 8 \Phi^0(z)^2 + \frac{m-1}{\Phi^0(z)} 4 \Phi^0(z) (\ve^2 + \Phi^0(z)^2)
\\
& = (4m+8) \Phi^0(z)^2 + 4 m \ve^2.
\end{align*}
Similarly, we obtain
\begin{align*}
\Delta_\Psi(K) & = g''(\Psi^0(\sigma)) + \frac{k-1}{\Psi^0(\sigma)} g'(\Psi^0(\sigma)) 
 = 32  + \frac{k-1}{\Psi^0(\sigma)} 32 \Psi^0(\sigma) = 32 k.
\end{align*}
This establishes \eqref{delphiK}. The desired conclusion \eqref{fBK} immediately follows from this equation.

\end{proof}

We can now give the

\begin{proof}[Proof of Theorem \ref{T:main}]
By Lemma \ref{L:yam3} we have for the function $K$ 
\[
\frac{\Phi(\nabla_z K)^2 + \frac{\Phi^0(z)^{2}}4 \Psi(\nabla_\sigma K)^2}{K} = 16\ \Phi^0(z)^2.
\]
On the other hand, \eqref{fBK} gives
\[
4\fB(K) = 16 (m+2k+2) \Phi^0(z)^2 + 16 m \ve^2.
\]
Combining these two equations, it is clear that \eqref{magicone} does hold with $A = 16 m \ve^2$.

\end{proof}

\begin{proof}[Proof of Theorem \ref{T:yam}]
In Theorem \ref{T:main} we have proved that the function defined by $\rho(z,\sigma) = K(z,\sigma)^{1/4}$, where $K$ is defined by \eqref{rho}, satisfies the intertwining equation
\begin{equation}\label{yammyammy}
\fB(\rho^{-(2(k-1)+m)}) = - m(m + 2(k-1)) \ve^2 \rho^{-(2(k+1)+m)}.
\end{equation}
If we now keep in mind that \eqref{yam} gives when $\sigma_0 = 0\in \R^k$,
\[
K_{\ve,0} = ((m+2(k-1))\ve^2)^{\frac{m+2(k-1)}4} \rho^{-(2(k-1)+m)},
\]
from \eqref{yammyammy} and a rescaling argument based on the observation that $\fB(\la u) = \la \fB(u)$, we easily reach the desired conclusion that $K_{\ve,0}$ solves \eqref{yamme}. Since the equation is invariant under translations along $M = \{0\}_z\times \R^k$, it is clear that $K_{\ve,\sigma_0}$ is also a solution for every $\sigma_0\in \R^k$.

\end{proof}



\section{Appendix: Minkowski norms}\label{S:app}

Let $M:\Rn\to [0,\infty)$ be a Minkowski norm in $\Rn$. By this we mean that $M^2\in \C^{2}(\Rn\setminus\{0\})$ is a strictly convex function such that $M(\la x) = |\la| M(x)$ for every $x\in \Rn$ and $\la\in \R$. Since all norms in $\Rn$ are equivalent, there exist constants $\beta \ge \alpha>0$ such that
\begin{equation}\label{equinorm}
\alpha |\xi| \le M(\xi) \le \beta |\xi|.
\end{equation} 
We denote by 
\begin{equation}\label{dual}
M^0(x) = \underset{M(\xi)=1}{\sup}\ \sa x,\xi\da,
\end{equation}
its Legendre transform, also known as the dual norm of $M$. The Cauchy-Schwarz inequality trivially holds 
\begin{equation*}
|\sa x,y\da| \le M(x) M^0(y).
\end{equation*}
A basic property of the norms $M$ and $M^0$ is the following, see \cite[Lemma 2.1]{BP}, and also (3.12) in \cite{CS},
\begin{equation}\label{Finabla}
M(\nabla M^0(x)) = M^0(\nabla M(x))  = 1,\ \ \ \ \ \ \ \ x\in \Rn\setminus\{0\}.
\end{equation}
Given a function $u\in C^1(\Rn)$, an elementary, yet useful,  consequence of the homogeneity of $M$ is
\begin{equation}\label{hom}
\sa \nabla M(\nabla u(x)),\nabla u(x)\da = M(\nabla u(x)).
\end{equation}
A less obvious basic fact is the following formula, which can be found in \cite[Lemma 2.2]{BP}.

\begin{lemma}\label{L:BP}
For every $x\in \Rn\setminus\{0\}$, one has
\[
M^0(x) \nabla M(\nabla M^0(x)) = x,\ \ \ \ \ \ M(x) \nabla M^0(\nabla M(x)) = x. 
\]
\end{lemma}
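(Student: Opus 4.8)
The plan is to prove the first identity and then deduce the second from the biduality $(M^0)^0 = M$, which is standard for Minkowski norms. Throughout I write $y = \nabla M^0(x)$ and set $w = M^0(x)\,\nabla M(y) = M^0(x)\,\nabla M(\nabla M^0(x))$, so that the first identity is precisely the assertion $w = x$.

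First I would record four scalar identities attached to the pair $(x,w)$. From \eqref{Finabla} one has $M(y) = M(\nabla M^0(x)) = 1$ and $M^0(\nabla M(y)) = 1$; the degree-one homogeneity of $M^0$ then gives $M^0(w) = M^0(x)\,M^0(\nabla M(y)) = M^0(x)$. Next, Euler's identity \eqref{hom} applied to $M$ at the vector $y$ yields $\langle \nabla M(y), y\rangle = M(y) = 1$, whence $\langle w, y\rangle = M^0(x)$; and the same homogeneity reasoning applied to the degree-one function $M^0$ gives $\langle x, y\rangle = \langle x, \nabla M^0(x)\rangle = M^0(x)$. In summary, writing $c = M^0(x)$, both $x$ and $w$ satisfy $M^0(\,\cdot\,) = c$ and $\langle\,\cdot\,, y\rangle = c$.

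The decisive step is to conclude from this that $x = w$. By the Cauchy--Schwarz inequality together with $M(y) = 1$ one has $\langle v, y\rangle \le M^0(v)\,M(y) = M^0(v)$ for every $v$, so the affine hyperplane $\{v : \langle v, y\rangle = c\}$ is a supporting hyperplane of the convex body $\{M^0 \le c\}$, meeting its boundary $\{M^0 = c\}$ exactly at the vectors realizing equality in Cauchy--Schwarz against $y$. Since $M$ is smooth, its dual $M^0$ is \emph{strictly} convex, so this supporting hyperplane touches $\{M^0 = c\}$ in a single point; as both $x$ and $w$ lie in that intersection they coincide, which is the first identity. Finally, replacing $M$ by $M^0$ throughout and using $(M^0)^0 = M$ turns the first identity into $M(x)\,\nabla M^0(\nabla M(x)) = x$, the second identity.

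I expect the main obstacle to be the uniqueness argument in the preceding paragraph: the equalities of norms and pairings alone only place $x$ and $w$ on a common supporting hyperplane of a level set of $M^0$, and collapsing this to $x = w$ genuinely requires the \emph{strict} convexity of $M^0$. This is where the standing hypothesis that $M^2$ is strictly convex and $C^2$ enters, through the classical fact that smoothness of a Minkowski norm is equivalent to strict convexity of its dual. I would either invoke this duality directly, or, as an alternative, recast the same conclusion via the Lagrange--multiplier characterization of the maximizer in \eqref{dual}: the first-order condition $x = \mu\,\nabla M(y)$ combined with Euler's identity again forces $\mu = M^0(x)$ and hence $w = x$.
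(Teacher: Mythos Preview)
Your argument is correct. The paper itself does not supply a proof of this lemma; it simply records the statement and attributes it to \cite[Lemma~2.2]{BP}. So there is nothing to compare your route against within the paper. For what it is worth, your approach---showing that $x$ and $w=M^0(x)\nabla M(\nabla M^0(x))$ both lie on the same supporting hyperplane of the strictly convex body $\{M^0\le M^0(x)\}$, then invoking the smoothness/strict-convexity duality to force $x=w$---is one of the standard proofs of this fact. The Lagrange-multiplier variant you sketch at the end is in fact slightly cleaner: once you know $y=\nabla M^0(x)$ is a maximiser in \eqref{dual} (which you verified via $M(y)=1$ and $\langle x,y\rangle=M^0(x)$), the first-order condition $x=\mu\nabla M(y)$ together with Euler's identity immediately gives $\mu=M^0(x)$, without any separate appeal to uniqueness. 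Either way the proof stands; the only point to keep explicit is that the strict convexity of $M^0$ you need is a consequence of the $C^2$ regularity of $M^2$ assumed in the appendix, exactly as you flagged.
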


The Euler-Lagrange equation of the energy
\begin{equation}\label{finen}
\mathscr E_{M}(u) = \frac 12 \int M(\nabla u)^2 dx
\end{equation}
is the so-called Finsler Laplacian
\begin{equation}\label{EL}
\Delta_M(u) = \operatorname{div}(M(\nabla u)\nabla M(\nabla u)) = 0.
\end{equation}
It is worth emphasising here that the operator in \eqref{EL} is quasilinear, but not linear, unless of course $M(x) = |x|$. However, the operator $\Delta_M$ is elliptic. In fact, since $M^2$ is homogeneous of degree $2$, Euler formula gives
\[
\sa\nabla(M^2)(\xi),\xi\da = M(\xi)^2,
\]
and from \eqref{equinorm} we thus have for every $\xi\in \Rn$
\[
\alpha^2 |\xi|^2 \le \sa\nabla(M^2)(\xi),\xi \da \le \beta^2 |\xi|^2.
\]
An important property of the dual norm is the following result which follows from \eqref{Finabla} and from Lemma \ref{L:BP}, see \cite{CS} and \cite{FK}.

\begin{proposition}\label{P:fin}
Consider the function 
\[
\psi(x) = M^0(x).
\]
Then if $k\in C^2(\R)$, and $v = k\circ \psi$, one has in $\Rn\setminus\{0\}$
\[
\Delta_M(v) = k''(\psi) + \frac{n-1}\psi k'(\psi).
\]
\end{proposition}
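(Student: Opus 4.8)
The plan is to compute $\Delta_M(v)=\operatorname{div}\big(M(\nabla v)\nabla M(\nabla v)\big)$ directly on $\Rn\setminus\{0\}$, exploiting the homogeneity of $M$ and $M^0$ together with \eqref{Finabla} and Lemma \ref{L:BP}. Note first that on $\Rn\setminus\{0\}$ the function $\psi=M^0$ is $C^2$ and strictly positive (by \eqref{equinorm} applied to $M^0$), so $v=k\circ\psi$ is $C^2$ there and the chain rule gives $\nabla v=k'(\psi)\,\nabla M^0(x)$.

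The key structural step is to rewrite the vector field occurring in the Finsler Laplacian. Since $M^2$ is homogeneous of degree $2$ and even, the field $W(\xi):=M(\xi)\nabla M(\xi)=\tfrac12\nabla(M^2)(\xi)$ is $C^1$ on all of $\Rn$ and satisfies $W(t\xi)=tW(\xi)$ for every $t\in\R$, positive or negative. Hence
$M(\nabla v)\nabla M(\nabla v)=W(\nabla v)=W\big(k'(\psi)\nabla M^0(x)\big)=k'(\psi)\,W(\nabla M^0(x))=k'(\psi)\,M(\nabla M^0(x))\nabla M(\nabla M^0(x))$,
an identity that holds pointwise even where $k'(\psi)=0$. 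Invoking \eqref{Finabla} we get $M(\nabla M^0(x))=1$, and Lemma \ref{L:BP} gives $M^0(x)\nabla M(\nabla M^0(x))=x$, i.e. $\nabla M(\nabla M^0(x))=x/\psi$. Combining, $M(\nabla v)\nabla M(\nabla v)=\dfrac{k'(\psi)}{\psi}\,x$, which is a $C^1$ vector field on $\Rn\setminus\{0\}$.

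It then remains to take the divergence. Writing $h(t)=k'(t)/t$, we have $\Delta_M(v)=\operatorname{div}\big(h(\psi)\,x\big)=h'(\psi)\,\langle\nabla\psi,x\rangle+h(\psi)\,\operatorname{div}(x)$. Euler's identity for the $1$-homogeneous function $\psi=M^0$ (equivalently \eqref{hom} applied to $M^0$, giving $\langle\nabla M^0(x),x\rangle=M^0(x)$) yields $\langle\nabla\psi,x\rangle=\psi$, while $\operatorname{div}(x)=n$. Therefore $\Delta_M(v)=\psi\,h'(\psi)+n\,h(\psi)$, and substituting $h'(t)=\dfrac{k''(t)t-k'(t)}{t^2}$ gives $\Delta_M(v)=k''(\psi)-\dfrac{k'(\psi)}{\psi}+n\,\dfrac{k'(\psi)}{\psi}=k''(\psi)+\dfrac{n-1}{\psi}k'(\psi)$, which is the asserted formula.

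The computation is mechanical once these facts are assembled; the only genuine subtlety, and the step I would be most careful about, is the behaviour of $\nabla M$ near the origin, i.e. at points where $k'(\psi)=0$ and hence $\nabla v=0$, where $\nabla M$ itself need not be defined. This is precisely why I would phrase everything in terms of the globally $C^1$, $1$-homogeneous field $W=\tfrac12\nabla(M^2)$ instead of $\nabla M$: with that reformulation no case distinction is needed, and the divergence at the end is legitimately taken of a $C^1$ vector field on $\Rn\setminus\{0\}$.
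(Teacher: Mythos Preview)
Your proof is correct and uses exactly the two ingredients the paper singles out, namely \eqref{Finabla} and Lemma \ref{L:BP}; the paper itself does not spell out a proof but only cites \cite{CS} and \cite{FK}, so your argument is in full agreement with the indicated approach. Your extra care in passing to the $C^1$ field $W=\tfrac12\nabla(M^2)$ to handle points where $k'(\psi)=0$ is a nice touch that the paper does not mention.
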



\bibliographystyle{amsplain}

\end{document}